%
%
%
%
\documentclass{amsart}
\usepackage{amssymb}
\usepackage{amsmath}
\usepackage{amsfonts}
\usepackage{pstricks}
\usepackage{pst-node}
\usepackage{graphicx}
\theoremstyle{plain}
\newtheorem{thm}{Theorem}[section]

\newtheorem{dfn}[thm]{Definition}
\newtheorem{rmk}[thm]{Remark}

\newtheorem{prb}[thm]{Problem}

\theoremstyle{remark}

\def\pmc#1{\setbox0=\hbox{#1}
    \kern-.1em\copy0\kern-\wd0
    \kern.1em\copy0\kern-\wd0}

\begin{document}

\bigskip

\title[On
the Alexandroff-Borsuk  problem]{
On the Alexandroff-Borsuk  problem  }


\bigskip

\author[M.~Cencelj]{Matija Cencelj}
\address{Faculty of Education and
Institute of Mathematics,
Physics and Mechanics,
University of Ljubljana, Kardeljeva plo\v s\v cad 16, Ljubljana
1000,
Slovenia} \email{matija.cencelj@guest.arnes.si}

\author[U.~H.~Karimov]{Umed H. Karimov}
\address{Institute of Mathematics,
Academy of Sciences of Tajikistan, Ul. Ainy $299^A$,
Dushanbe 734063,
Tajikistan}
\email{umedkarimov@gmail.com}

\author[D.~D.~Repov\v{s}]{Du\v{s}an D. Repov\v{s}}
\address{Faculty of Education and Faculty of Mathematics and Physics,
University of Ljubljana, Kardeljeva plo\v s\v cad 16,
 Ljubljana
1000,
Slovenia} \email{dusan.repovs@guest.arnes.si}

\subjclass[2010]{Primary: 54F15, 55N15; Secondary: 54G20, 57M05}
\keywords{ANR, finite polyhedron, homotopy equivalence,
$\varepsilon$-map, cellular map, almost-smooth manifold, $|E_8|$-manifold,
Kirby-Siebenmann class, Galewski-Stern obstruction, non-triangulable manifold, Alexandroff-Borsuk Manifold Problem}

\begin{abstract}
We investigate the classical Alexandroff-Borsuk  problem in the
category of non-triangulable manifolds: Given an $n$-dimensional
compact non-triangulable manifold $M^n$ and $\varepsilon > 0$,
does there exist an $\varepsilon$-map of $M^n$ onto an
$n$-dimensional finite polyhedron which induces a homotopy
equivalence?
\end{abstract}

\date{\today}

\maketitle

\section{Introduction}

 In 1928 Alexandroff \cite{A} proved the following important theorem:

{\bf Alexandroff Theorem.}
{\it Every
$n$-dimensional
compact metric space $X$ has the following properties:
\begin{itemize}
\item
for every
$\varepsilon > 0,$
$X$
admits  an $\varepsilon$-map $f:X^n \to P^n$
onto some $n$-dimensional
finite polyhedron $P^n$; and
\item
for some $\mu > 0,$
$X$ does not admit any $\mu$-map
$g:X^n \to Q^k$
of $X^n$ 
 onto a polyhedron $Q^k$
of  dimension $k<n$.
\end{itemize}}

Recall that for a metric space $X$ and $\varepsilon >0$, 
a continuous
map $f\colon  X \to P$  is called
an
$\varepsilon$-map if the
preimage $f^{-1}(p)$ of every point $ p \in P$ has diameter $< \varepsilon.$

The condition of compactness in Alexandroff's theorem above
 is essential since
in 1953
 Sitnikov
\cite{E, Sit} constructed 
an example of a $2$-dimensional subspace of $R^3$ which
can be $\varepsilon$-mapped  onto a 1-dimensional polyhedron for arbitrarily small
$\varepsilon >0$.

In 1954 Borsuk \cite{B} asked whether every compact absolute neighborhood retract (ANR) is homotopy equivalent
to a finite polyhedron.
This  difficult
question was answered in the affirmative
 in 1977  by
West \cite{W}.

It follows by Wall's obstruction theory \cite{Wa}
 that for every $n > 2$,
every $n$-dimensional
compact ANR  is homotopy equivalent to an
$n$-dimensional polyhedron, having the structure of a finite simplicial
complex (cf. \cite{W}). 

The following natural 
problem has been opened for a very long time (compare with \cite{KR}):

\medskip

{\bf Alexandroff-Borsuk ANR Problem.}
{\it Given any compact $n$-dimensional absolute neighborhood
retract  $X^n$ and any $\varepsilon > 0$, does there exist an
$\varepsilon$-map $f:X^n \to P^n$ of $X^n$ onto a finite $n$-dimensional
polyhedron $P^n$ which is a homotopy equivalence?}

\medskip

In this paper we shall consider the Alexandroff-Borsuk problem for the
category of non-triangulable manifolds. Since every
topological
 $n$-manifold is a
separable metric locally Euclidean space  of dimension $n$
(cf. \cite{L}), it is a locally contractible finite-dimensional space and
therefore it is an ANR (cf. \cite{Bor}).

 It follows
by the
West Theorem  mentioned
above
that every compact  manifold has the
homotopy type of a finite polyhedron. This fact was first proved
in 1969
 by Kirby and Siebenmann  \cite{KS}. So we have the
following natural special case of the Alexandroff-Borsuk problem:

\medskip

{\bf Alexandroff-Borsuk Manifold Problem.} 
{\it Given a
 compact $n$-di\-men\-sio\-nal manifold $M^n$ and $\varepsilon>0,$ does there
exist an $\varepsilon$-map of $M^n$
onto a
finite $n$-dimensional polyhedron $P^n $ which is a homotopy
equivalence?}

Recall that for every $n \geq 4,$ there exists a closed $n$-dimensional manifold  which
is not a polyhedron. Such manifolds were first constructed
by Freedman \cite{F}
 in
dimension 4, by
Galewski and Stern \cite{GS}
in  dimension 5,
 and by
Manolescu \cite{M}
 in  dimensions $n \geq 6$.

\medskip

The following is the main result of our paper:

\medskip

{\bf Main Theorem.}
{\it The Alexandroff-Borsuk Manifold Problem has an affirmative
 solution for the
non-triangulable closed manifolds of Freedman, Galewski and Stern, and
Manolescu.}

\medskip

\section{Preliminaries}

We shall work with the categories of separable metric spaces, CW
complexes and continuous maps. In these categories all classical
definitions of dimension coincide: ${\rm dim}\ X = {\rm ind}\ X
={\rm Ind}\ X$ (cf. \cite{E}).

We list some definitions and theorems which we shall need in the sequel:

\begin{thm}\label{Cellular} {\rm(Cellular Approximation
Theorem \cite[p. 77]{Wh})}. Let $(X,A)$ and $(Y,B)$ be relative
CW complexes and let $f\colon (X,A) \rightarrow (Y,B)$ be a
continuous map. Then $f$ is homotopic $(rel \ A)$ to a cellular
map.
\end{thm}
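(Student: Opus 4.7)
The plan is to exploit the common structural presentation of each of the three families of non-triangulable closed manifolds $M^n$ of Freedman, Galewski--Stern, and Manolescu:
\[
M^n \;=\; N^n \cup_\Sigma \Delta, \qquad \Delta = \bigsqcup_{i=1}^{r}\Delta_i,
\]
where $N^n$ is a smooth (hence PL-triangulable) compact $n$-manifold with boundary $\Sigma = \bigsqcup_i \Sigma_i$ a disjoint union of PL homology $(n-1)$-spheres, and each $\Delta_i$ is a contractible compact topological $n$-manifold (a ``Freedman plug'') with $\partial\Delta_i = \Sigma_i$, carrying the Kirby--Siebenmann or Galewski--Stern obstruction.

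Given $\varepsilon > 0$, the target polyhedron is the $n$-dimensional finite complex
\[
P^n \;:=\; |T| \cup_\Sigma C\Sigma, \qquad C\Sigma = \bigsqcup_i C\Sigma_i,
\]
obtained by fixing a PL triangulation $T$ of $N^n$ of mesh $<\varepsilon/4$ restricting to a PL triangulation of each $\Sigma_i$ and attaching the simplicial cones $C\Sigma_i$ along $\Sigma$. Because each $\Delta_i$ and each $C\Sigma_i$ is contractible with common collared boundary $\Sigma_i$, the gluing lemma for cofibrations (with Brown's collaring theorem providing the cofibration structure on the manifold side) yields a homotopy equivalence $M^n \simeq P^n$.

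The map $f: M^n \to P^n$ would be defined piecewise: $f = \operatorname{id}$ on $N^n$, so $T$-simplices give point-preimages of diameter $<\varepsilon/4$. On each plug $\Delta_i$, I would construct $f_i: \Delta_i \to C\Sigma_i$ as follows. Choose a topological bicollar $c_i: \Sigma_i \times [0,1] \hookrightarrow \Delta_i$ (Brown) and map $c_i(x,t) \mapsto (x,t) \in C\Sigma_i$ via the natural radial parametrization. On the compact core $B_i := \overline{\Delta_i \setminus c_i(\Sigma_i \times [0,1))}$, use the local Euclidean structure of $\Delta_i$ to cover $B_i$ by finitely many open charts $U_1,\dots,U_N$ of diameter $<\varepsilon/2$; refine a neighborhood $V_i \subset C\Sigma_i$ of the cone vertex $v_i$ into a polyhedral subcomplex with vertices $w_1,\dots,w_N$, and send each $U_j$ by a partition-of-unity map into a small simplicial star $\operatorname{st}(w_j) \subset V_i$ of matching diameter, extending the collar map continuously across $\partial B_i$.

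Verification would then run as follows: every point-preimage of $f$ lies in a $T$-simplex, a thin collar slab, or an overlap $U_j \cap B_i$, each of diameter $<\varepsilon$, so $f$ is an $\varepsilon$-map. Since $f|_{N^n}$ is the identity and each $f_i:(\Delta_i,\Sigma_i)\to(C\Sigma_i,\Sigma_i)$ is a map between contractible pairs restricting to the identity on the collared boundary, the gluing lemma for cofibrations gives that $f$ is a global homotopy equivalence. The principal technical obstacle will be the construction of $f_i|_{B_i}$ as an $\varepsilon$-map that is \emph{simultaneously} a homotopy equivalence of pairs $(B_i,\partial B_i) \to (V_i,\partial V_i)$: because $B_i$ is a non-triangulable compact topological $n$-manifold, no PL subdivision of $B_i$ can be imposed directly, and one must patch the local Euclidean subdivisions so as to meet both the metric $\varepsilon$-bound and the homotopy-equivalence requirement, invoking Theorem \ref{Cellular} on the polyhedral side $V_i$ to homotope the partition-of-unity map into a cellular one relative to $\partial B_i$.
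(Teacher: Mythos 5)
Your proposal does not address the statement at all. Theorem~\ref{Cellular} is the classical Cellular Approximation Theorem, a standard result about relative CW complexes that the paper does not prove but merely cites from Whitehead's \emph{Elements of Homotopy Theory} \cite[p.~77]{Wh}. What you have written instead is a sketch of a strategy for the paper's \emph{Main Theorem} (the affirmative solution of the Alexandroff--Borsuk Manifold Problem for the Freedman, Galewski--Stern, and Manolescu manifolds). Indeed your final sentence explicitly \emph{invokes} Theorem~\ref{Cellular} as a known tool, which makes clear you are using it rather than proving it. A proof of the Cellular Approximation Theorem would proceed by an entirely different line of argument: skeleton-by-skeleton induction, compression of the image of each cell past higher-dimensional cells by local deformations (or smooth/simplicial approximation and Sard-type transversality on individual cells), and the homotopy extension property to assemble these cell-wise deformations into a single homotopy rel~$A$.

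Setting aside the mismatch, even as an attempt at the Main Theorem your sketch diverges from the paper's actual argument and leaves a real gap. The paper does not rely on any ``plug'' decomposition $M^n = N^n \cup_\Sigma \Delta$; rather it triangulates the punctured manifold $M^n\setminus\{p\}$ directly (smoothness of the complement of a point in the Freedman case, vanishing of the Galewski--Stern obstruction in $H^5(M^5\setminus\{p\};\operatorname{Ker}\mu)$ in dimension~5, and then products with tori), takes a small neighborhood $L$ of $p$, replaces $L$ by a finite polyhedron $B$ via West--Wall, makes the gluing map simplicial using Theorem~\ref{Cellular}, and forms the adjunction space $M^n\cup_g B$. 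Your construction of $f_i|_{B_i}$ by patching partition-of-unity maps across local Euclidean charts is precisely the step you flag as ``the principal technical obstacle,'' and as stated it is not a proof: producing a controlled homotopy equivalence of pairs out of chart-by-chart maps on a non-triangulable core requires an argument, not an appeal to the Cellular Approximation Theorem on the polyhedral side alone, since the domain side $B_i$ has no CW structure to make Theorem~\ref{Cellular} applicable.
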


Recall that a map $f\colon X \rightarrow Y$ between CW complexes $X$ and $Y$ 
is said to be {\it cellular} if
$f(X_n)\subset Y_n$ for every $n$.
 The Simplicial Approximation
Theorem is a special case of the Cellular Approximation Theorem,
cf. \cite[p. 76]{Wh} for details.

\begin{dfn} \label{Almost-smooth} A manifold $M$
is said to be almost-smooth if
it admits  a smooth structure in the complement $M\setminus \{p\}$ of any
point $p\in M$.
\end{dfn}

\begin{thm}\label{Cairns} {\rm (Cairns-Whitehead } \cite{C2, C1, Jam, JWh}{\rm ).}
Every smooth manifold can be given a simplicial structure.
\end{thm}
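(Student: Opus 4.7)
The strategy is to reduce the Main Theorem to the corresponding statement for \emph{almost-smooth} manifolds (already established earlier in the paper via the Cairns--Whitehead Theorem~\ref{Cairns}), and then to verify that each of the non-triangulable manifolds of Freedman, Galewski--Stern, and Manolescu is almost-smooth in the sense of Definition~\ref{Almost-smooth}. Granting this, the preceding lemma furnishes, for every $\varepsilon>0$, an $\varepsilon$-map of $M^n$ onto an $n$-dimensional finite polyhedron that is a homotopy equivalence: one triangulates the smooth complement $M^n\setminus\{p\}$, excises a small neighbourhood of $p$ whose closure is a cone of diameter $<\varepsilon$, and collapses that closure to a point in the target.

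For Freedman's $|E_8|$-manifold the verification is immediate from the construction: $|E_8|$ is obtained by gluing the smooth $E_8$-plumbing $P^4$ (a compact smooth $4$-manifold with boundary the Poincar\'e homology sphere $\Sigma^3$) to the cone on $\Sigma^3$ along their common boundary. Freedman's theorem \cite{F} guarantees that this cone is homeomorphic to $D^4$, so $|E_8|$ is a topological $4$-manifold. Off the cone vertex, the smooth structure on $P^4$ and the smooth product structure on $\Sigma^3\times(0,1]$ (the cone with the apex removed) match along a bicollar of $\partial P^4$, yielding a global smooth structure. Hence $|E_8|$ is almost-smooth at the cone point.

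For the Galewski--Stern $5$-manifolds and the Manolescu manifolds in dimensions $n\ge 6$, I would present each example in the analogous cone-point form $W^n\cup_{\Sigma^{n-1}}c(\Sigma^{n-1})$, where $W^n$ is a compact smooth manifold whose boundary $\Sigma^{n-1}$ is a PL homology sphere carrying a nontrivial Galewski--Stern obstruction; the constructions in \cite{GS,M}, together with Freedman's theorem and its higher-dimensional analogues, ensure that $c(\Sigma^{n-1})\cong D^n$, so that the ambient space is genuinely a topological manifold. Almost-smoothness then follows from the same cone-and-collar argument as in the $|E_8|$ case, with the apex of the cone serving as the unique singular point.

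The main obstacle is precisely this last verification: one has to locate, for each example, a single point off which a smooth structure exists, as opposed to a more diffuse singular set (which would appear, for instance, in products of lower-dimensional non-triangulable examples with spheres or circles). Once such a cone-point representative is identified and almost-smoothness is established, invoking the earlier ε-map lemma (whose proof rests on Theorem~\ref{Cairns}) immediately produces the desired $\varepsilon$-map and completes the proof.
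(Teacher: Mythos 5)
Your proposal does not address the statement you were asked to prove. The statement is the Cairns--Whitehead Theorem (Theorem~\ref{Cairns}), a classical result asserting that every smooth manifold admits a simplicial (indeed $C^1$-compatible) triangulation; the paper does not prove it but cites Cairns' original papers and Whitehead's \emph{On $C^1$ complexes} for it, and a blind proof would have to go through something like a Whitney embedding, geodesic or secant simplices, and the verification that small smooth simplices can be fit together into a triangulation of the manifold. Instead, you have written a proof \emph{plan for the Main Theorem}, explicitly \emph{invoking} Theorem~\ref{Cairns} twice as a known tool. That is a different task entirely.

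Even taken as a sketch of the Main Theorem, the plan contains a fatal error: you propose to show that the Galewski--Stern $5$-manifolds and Manolescu's $n$-manifolds ($n\geq 6$) are almost-smooth in the sense of Definition~\ref{Almost-smooth}. But Theorem~\ref{Noalmost-smooth} of this very paper shows that \emph{no} non-triangulable almost-smooth manifolds exist in dimension $n>4$: removing a point does not change $H^4(-;\mathbb{Z}_2)$ once $n>4$, so the Kirby--Siebenmann class survives to $M\setminus\{p\}$ and blocks any smooth (even PL) structure there. Almost-smoothness is special to the $4$-dimensional case. The paper's actual argument for $M^5_{GS}$ uses the vanishing of $H^5(M^5\setminus\{p\};\mathrm{Ker}\,\mu)$ to kill the Galewski--Stern obstruction (yielding a triangulation, not a smooth structure, on the complement), and for $M^{5+n}_M$ it does not remove a point at all but exploits the product decomposition $M^{5+n}_M=M^5_{GS}\times T^n$ and the $\varepsilon$-map already built for the $5$-dimensional factor. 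Your cone-point strategy would also fail for $n>5$ by Theorem~\ref{Non-triangulable}, which shows that in those dimensions even a triangulation of $M\setminus\{p\}$ is obstructed.
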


Freedman has proved the following important theorem (cf. \cite[Corollary
1.6]{F}):

\begin{thm}\label{Freedman}
There is a closed connected  almost-smooth
 4-manifold $|E_8|$
with the intersection matrix $E_8$.
\end{thm}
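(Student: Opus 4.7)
The plan is to construct $|E_8|$ by topologically capping off the smooth $E_8$-plumbing and then to verify that the cap can be smoothed away from a single point. The starting point is the smooth $E_8$-plumbing $W^4$: plumb together eight copies of the disk bundle of Euler number $-2$ over $S^2$ according to the $E_8$ Dynkin diagram. The resulting $W$ is a smooth, simply connected, compact $4$-manifold whose intersection form is $E_8$ and whose boundary is the Poincar\'e homology $3$-sphere $\Sigma$. The crucial topological input (due to Freedman) is that every integral homology $3$-sphere bounds a topological contractible $4$-manifold; in particular $\Sigma=\partial C$ for some compact contractible topological $4$-manifold $C$. Define $|E_8|:=W\cup_{\Sigma}C$. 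A Mayer--Vietoris argument, using the contractibility of $C$, yields $H_2(|E_8|)\cong H_2(W)\cong\mathbb{Z}^8$ and identifies the intersection form on this group with $E_8$.

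To verify almost-smoothness in the sense of Definition~\ref{Almost-smooth}, fix any $p\in|E_8|$. The complement $|E_8|\setminus\{p\}$ is a non-compact, boundaryless topological $4$-manifold, so its Kirby--Siebenmann obstruction lies in $H^4(|E_8|\setminus\{p\};\mathbb{Z}/2)$, which vanishes because a connected non-compact $n$-manifold without boundary has trivial top cohomology. By the Freedman--Quinn smoothing theorem in dimension $4$, vanishing of the Kirby--Siebenmann invariant on a $4$-manifold without boundary yields a smooth structure, so $|E_8|\setminus\{p\}$ is smoothable for every $p$.

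The main obstacle is the first step: the existence of a topological contractible $4$-manifold bounding $\Sigma$ is the deep ingredient, resting on Casson handle theory and Freedman's disk-embedding theorem. Granted that, the almost-smoothness is essentially a cohomological triviality: the Kirby--Siebenmann obstruction of $|E_8|$ itself is nontrivial (it equals the Rokhlin invariant of $\Sigma$, which is $1$ since $\sigma(E_8)/8=1$), but it dies as soon as any point is removed, because the top singular cohomology of a non-compact manifold vanishes.
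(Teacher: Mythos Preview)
The paper does not give its own proof of this theorem: it is quoted verbatim as Freedman's result \cite[Corollary~1.6]{F}, and a few lines later the paper separately cites \cite[Theorem on p.~116]{FQ} for the fact that $|E_8|\setminus\{p\}$ carries a smooth structure. So there is nothing in the paper to compare your argument against.

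That said, your sketch is the standard and correct outline of Freedman's construction: form the smooth $E_8$-plumbing $W$ with $\partial W=\Sigma$ the Poincar\'e sphere, cap it with a contractible topological $4$-manifold $C$ bounding $\Sigma$ (this is exactly the deep Freedman input you identify), and read off the intersection form by Mayer--Vietoris. Two small remarks. First, the sign: plumbing disk bundles of Euler number $-2$ produces the negative-definite form $-E_8$; for the positive-definite $E_8$ one uses Euler number $+2$. This is only a convention issue and does not affect anything downstream. Second, your almost-smoothness argument is fine but slightly roundabout: the statement ``$\Delta=0$ on a $4$-manifold without boundary implies smoothable'' is \emph{false} for closed $4$-manifolds; what you are really invoking is Quinn's theorem that every \emph{noncompact} connected $4$-manifold is smoothable (this is the content of the Freedman--Quinn reference the paper cites). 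Since $|E_8|\setminus\{p\}$ is noncompact for every $p$, that suffices, and the vanishing of $H^4$ is the reason the obstruction theory goes through in Quinn's argument. Your closing observation that $\Delta(|E_8|)=\mu(\Sigma)=1$ is correct and is essentially what underlies the paper's Theorem~\ref{Freedman`s Corollary 1.6}.
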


Freedman also established the following surprising fact:

\begin{thm}\label{Freedman`s Corollary 1.6}
Either $| E_8|$  is the first example in any dimension of a
manifold which is not homeomorphic to a polyhedron, or the 3-dimensional
Poincar\'e conjecture is false.
\end{thm}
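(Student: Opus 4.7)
The proof would proceed by contradiction: assume both that $|E_8|$ is homeomorphic to a finite simplicial complex $|K|$ and that the $3$-dimensional Poincar\'e conjecture holds, and derive a contradiction. The ``first example'' clause is then automatic, since by Rad\'o's triangulation of surfaces and Moise's theorem for $3$-manifolds, every topological manifold of dimension at most $3$ is triangulable, so any non-polyhedral manifold must have dimension at least $4$.

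The heart of the argument is to show that $K$ must be a combinatorial (PL) $4$-manifold, i.e., that the link $L_\sigma$ of every simplex $\sigma$ of $K$ is a PL sphere of dimension $3-\dim\sigma$. Proceed by induction on codimension, using the topological $4$-manifold structure of $|K|$. Codimension-$1$ links are $S^0$ by the $4$-manifold condition. For a $2$-simplex $\tau$, the codimension-$1$ step forces every vertex of $L_\tau$ to have valence $2$, hence $L_\tau$ is a disjoint union of circles; since the join $\partial\tau * L_\tau$ models the link in $|K|$ of an interior point of $\tau$, it is homotopy equivalent to $S^3$, which forces $L_\tau$ to be connected and thus $L_\tau\cong S^1$. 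For a $1$-simplex $e$, the prior cases identify the vertex links and edge links inside $L_e$ with $S^1$'s and $S^0$'s, so $L_e$ is a closed topological surface; its homology is that of $S^2$ (since $\partial e * L_e$ is the suspension of $L_e$ and is a homotopy $3$-sphere), and by the classification of surfaces $L_e\cong S^2$. For a vertex $v$, every simplex link of $L_v$ is now known to be a PL sphere, making $L_v$ a PL $3$-manifold, and the open-cone structure at $v$ shows $L_v$ is homotopy equivalent to $S^3$. The $3$-dimensional Poincar\'e conjecture, combined with the uniqueness of PL structures on topological $3$-manifolds (Moise), then yields $L_v$ PL-homeomorphic to $S^3$.

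Consequently $|E_8|$ carries a PL structure. In dimension $4$ every PL manifold admits a compatible smooth structure (by Theorem~\ref{Cairns} together with Munkres--Hirsch smoothing theory, whose obstructions vanish in dimensions $\leq 6$), so $|E_8|$ would be smoothable. But its intersection form is the even form $E_8$, so the smoothed $|E_8|$ is a closed spin $4$-manifold; Rokhlin's theorem requires its signature to be divisible by $16$, yet the signature of $E_8$ is $8$, a contradiction.

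The main obstacle is the inductive analysis of the links: at each stage one must rule out exotic possibilities, for instance a suspended non-simply-connected homology $3$-sphere, whose cone is not locally Euclidean, by carefully intertwining the combinatorial structure of $K$ with the topological $4$-manifold structure of $|K|$. The $3$-dimensional Poincar\'e conjecture enters only at the vertex stage and is precisely what upgrades the homotopy $3$-sphere conclusion for $L_v$ to a genuine PL $3$-sphere; without it, a fake $3$-sphere could appear as some $L_v$, permitting the triangulation to exist while blocking the smoothing argument.
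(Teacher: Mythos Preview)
The paper does not prove this statement at all: it is quoted verbatim as Freedman's Corollary~1.6 and immediately combined with Perel'man's resolution of the Poincar\'e conjecture to conclude that $|E_8|$ is non-polyhedral. There is thus no proof in the paper to compare against; your proposal is a correct reconstruction of the standard argument underlying Freedman's corollary.

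Two small points are worth tightening. First, the reference to Theorem~\ref{Cairns} in the smoothing step is misplaced: Cairns--Whitehead goes from smooth to simplicial, while here you need the opposite direction in dimension~$4$, namely that every PL $4$-manifold admits a compatible smooth structure. That is indeed what Munkres--Hirsch (or Cerf's result that $\pi_i(PL/O)=0$ for $i\le 6$) provides, so the conclusion is unaffected once the stray citation is removed. Second, the sentence ``the open-cone structure at $v$ shows $L_v$ is homotopy equivalent to $S^3$'' hides a step: local homology at the cone point gives $H_*(L_v)\cong H_*(S^3)$, but simple connectivity of $L_v$ needs the sandwich argument---a Euclidean chart around $v$ lies between two sub-cones, so the identity on $\pi_1(L_v)$ factors through $\pi_1(S^3)=1$. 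With these clarifications your outline is complete and is exactly the classical route: triangulation $\Rightarrow$ (via Poincar\'e) combinatorial triangulation $\Rightarrow$ PL $\Rightarrow$ smooth $\Rightarrow$ Rokhlin contradiction.
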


Since Perel'man has proved the Poincar\'e conjecture (cf. e.g. \cite{MT}), it follows that
 the 4-manifold  $|E_8|$ is not homeomorphic to any polyhedron. However, the
complement of any  point in $|E_8|$
 can be given a polyhedral
structure, since it admits a smooth structure on the complement of any point 
(cf. \cite[Theorem on p. 116]{FQ})
and by the Cairns-Whitehead Theorem \ref{Cairns}
every smooth manifold is triangulable.

We briefly recall the Kirby-Siebenmann and the Galewski-Stern
obstructions and some of their theorems (cf. \cite{GS2, Ru}).

\begin{dfn}{\rm (cf.} \cite[pp. xii-xiii, 78]{Ru}{\rm )}. $BTOP$ is the classifying space for stable topological
bundles and
 $\chi$ is some element of the $4$-dimensional cohomology
group $H^4(BTOP; \mathbb{Z}_2)$ which is called the universal
Kirby-Sieben\-mann class.
\end{dfn}

The construction of the CW complex $BTOP$ and  the element
$\chi$ is given e.g. in \cite[pp. xii-xiii, 78]{Ru}.

\begin{dfn} {\rm(cf. e.g.} \cite[pp. 403-404]{FW}{\rm )}.
Let $M^n$ be a topological manifold. Then the
 topological tangent bundle
$\tau_M^n$ of $M^n$ is a neighborhood $U$ of the diagonal $\Delta
\subset M^n \times M^n$ such that the projection $p_1\colon U
\rightarrow M^n$ is a topological $\mathbb{R}^n-bundle.$ Here,
$p_1(x,y) = x.$
\end{dfn}

Let $M$ be a topological manifold, and let $f\colon M \rightarrow
BTOP$ classify the stable tangent bundle of $M$. The class
$$f^4(\chi) \in H^4(M; \mathbb{Z}_2)$$ is a well-defined invariant
of $M$ since $f$ is unique up to homotopy. The Kirby-Siebenmann
class $\Delta (M)$ of $M$ is by definition the element
$f^4(\chi)$.

If $U \subset M,$  $i\colon U \rightarrow M$
is the inclusion, 
and $f_M$, $f_U$ classify the corresponding stable tangent bundles
then $f_M \circ i \simeq f_U$ and we have $i^4(\Delta (M)) =
\Delta (U)$.

\begin{thm}\label{KS} {\rm (cf. e.g.} \cite[p. 78]{Ru}{\rm )}. Let
$M$ be a topological manifold.
If $M$ admits a PL structure, in particular if it admits a smooth
structure, then $\Delta (M) = 0$. For $dim\ M \geq 5$  the
converse
also
 holds: if $\Delta (M) = 0,$ then $M$ admits a $PL$
structure.
\end{thm}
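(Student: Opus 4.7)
The plan is to interpret $\Delta(M)=f_M^*(\chi)$ as the first obstruction to lifting the classifying map $f_M\colon M\to BTOP$ of the stable topological tangent bundle through the forgetful fibration $BPL\to BTOP$, whose fiber is $TOP/PL$. For the forward direction, a PL structure on $M$ equips the stable tangent microbundle with a PL reduction, producing an explicit lift $\tilde f_M\colon M\to BPL$ of $f_M$. Since $\chi\in H^4(BTOP;\mathbb{Z}_2)$ is, by its definition, the transgression of the first nontrivial homotopy class of $TOP/PL$, its pullback to $BPL$ is null and $\Delta(M)=\tilde f_M^{\,*}(\chi|_{BPL})=0$. For smooth $M$ the same argument applies after composing with the inclusion $BO\to BPL$, or equivalently by first passing to the PL structure furnished by Theorem \ref{Cairns}.

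For the converse in dimensions $n\geq 5$, the essential input is the computation that $TOP/PL$ is an Eilenberg--MacLane space $K(\mathbb{Z}_2,3)$. Granting this, standard obstruction theory applied to the lifting problem $M\to BTOP$ along $BPL\to BTOP$ yields exactly one obstruction cocycle, living in $H^4(M;\pi_3(TOP/PL))=H^4(M;\mathbb{Z}_2)$, and this obstruction is precisely $\Delta(M)$. Vanishing of $\Delta(M)$ therefore produces a stable PL reduction of $\tau_M$, which is equivalent to a PL structure on the thickening $M\times\mathbb{R}^k$ for some $k$. To descend from such a stable PL structure to an honest PL structure on $M$ itself, one invokes the Kirby--Siebenmann Product Structure Theorem, which requires precisely the dimension hypothesis $\dim M\geq 5$.

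The main obstacle --- and the reason this result is genuinely deep rather than a formal consequence of obstruction theory --- lies in the two black-box inputs: the homotopy type of $TOP/PL$ and the Product Structure Theorem. Both depend crucially on Kirby's torus trick, in which a chart of $M$ is covered by a torus, immersion theory is combined with finite covering spaces of $T^n$, and the existence problem for a PL/smooth structure is reduced to surgery computations on high-dimensional tori. I would not attempt to reproduce this machinery; in the present paper the statement is used as a citation to \cite{Ru}, and the ``proof'' is simply the recollection of the obstruction-theoretic formalism above, with the hard work absorbed into the identification of $\pi_3(TOP/PL)$ and the validity of the product structure theorem.
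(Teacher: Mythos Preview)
Your proposal is correct and, in fact, more detailed than what the paper does: the paper states Theorem~\ref{KS} purely as a citation to \cite[p.~78]{Ru} and gives no proof whatsoever. Your sketch of the obstruction-theoretic framework (lifting along $BPL\to BTOP$ with fiber $TOP/PL\simeq K(\mathbb{Z}_2,3)$, then invoking the Product Structure Theorem) is exactly the standard argument behind that citation, and you correctly identify that the paper uses the result as a black box.
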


\begin{thm}\label{GS} {\rm (cf.} \cite{DFL, GS2, S}{\rm )}. Let $M^n$ be a topological
manifold, $n \geq 5$. Then the
 Galewski-Stern obstruction for a manifold $M^n$ to having
 a
simplicial triangulation is the image
$$\beta(\Delta(M^n))\in
H^5(M^n; \mbox{\rm Ker}\ \mu)$$ of the
 Kirby-Siebenmann class $\Delta(M^n)$ by
  the
Bockstein homomorphism $\beta$ for the exact sequence of
coefficients:
$$0 \longrightarrow\mbox{\rm Ker}\ \mu \longrightarrow \theta^H_3 \stackrel  {\mu}{ \longrightarrow}\mathbb{Z}_2 \longrightarrow 0.$$
\end{thm}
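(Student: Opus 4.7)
My plan is to handle all three families of non-triangulable manifolds by a single strategy. Each $M^n$ in the list admits a smooth (hence, by Cairns--Whitehead, Theorem~\ref{Cairns}, PL) structure away from a ``bad set'' $A\subset M$: a single point for Freedman's $|E_8|$, and more generally a closed $(n-4)$-dimensional subcomplex Poincar\'e-dual to the Kirby--Siebenmann class for the Galewski--Stern and Manolescu examples (so that $\Delta(M\setminus A)=0$). The idea is to triangulate a large finite part of $M\setminus A$ with mesh $<\varepsilon$, attach a polyhedral cone over the combinatorial frontier, and verify that the resulting quotient map $f\colon M\to P$ is an $\varepsilon$-map and a homotopy equivalence.

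For Freedman's $M=|E_8|$, almost-smoothness (Theorem~\ref{Freedman}) ensures $M\setminus\{p\}$ admits a smooth structure and therefore (Theorem~\ref{Cairns}) an infinite PL triangulation $\tilde K$, whose mesh can be driven to $0$ near $p$ by iterated subdivision. For $\varepsilon>0$, select a topological chart-ball $V\ni p$ of diameter $<\varepsilon$, and let $K$ be the (finite) subcomplex of $\tilde K$ consisting of simplices lying entirely outside $V$. Set $P:=|K|\cup_{|\partial K|}C(|\partial K|)$, where $|\partial K|$ is the $3$-dimensional combinatorial frontier of $K$ (the free $3$-faces) and $C$ denotes the simplicial cone. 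Define $f\colon M\to P$ to be the identity on $|K|$, and to collapse the open neighborhood $W:=M\setminus|K|$ of $p$ (of diameter $<\varepsilon$) onto the cone by extending the identity on $|\partial K|$; the extension exists since $W$ is contractible (it is a slightly enlarged topological Euclidean neighborhood of $p$ in a topological manifold) and $C(|\partial K|)$ is a cone. Every point-preimage is either a singleton or lies inside $W$, giving the $\varepsilon$-map property, and the gluing lemma for the pushouts $M=|K|\cup_{|\partial K|}W$ and $P=|K|\cup_{|\partial K|}C(|\partial K|)$ gives the homotopy equivalence. For the higher-dimensional cases, the same template applies with the point $p$ replaced by the bad set $A$: Theorem~\ref{KS} triangulates $M$ minus a small tubular $\varepsilon$-neighborhood of $A$, and the cone-collapse is carried out fibrewise over a fine simplicial approximation of $A$.

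The principal obstacle is in dimension $4$, where the converse $\Delta=0\Rightarrow\text{PL}$ of Theorem~\ref{KS} fails. In particular, one cannot choose a small smoothly embedded $3$-sphere $\partial N\subset M\setminus\{p\}$ bounding a topological $4$-ball about $p$: if one could, $M\setminus\mathrm{int}(N)$ would inherit a smooth structure and gluing in a smooth $D^4$-cap would produce a closed smooth $4$-manifold with $E_8$ intersection form, violating Rokhlin's theorem. The workaround is to work entirely with a \emph{topological} chart-ball $V$ around $p$ and rely on the local flexibility of the PL triangulation of $M\setminus\{p\}$ to arrange simplices cleanly inside and outside $V$. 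Verifying that the combinatorial frontier and its cone provide a homotopically correct polyhedral substitute for $W$ is the delicate technical step of the argument.
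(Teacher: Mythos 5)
Your proposal does not address the statement at all. Theorem~\ref{GS} is the Galewski--Stern classification result: it asserts that the obstruction to a topological $n$-manifold ($n\geq 5$) admitting a simplicial triangulation is precisely the Bockstein image $\beta(\Delta(M^n))\in H^5(M^n;\mathrm{Ker}\,\mu)$ of the Kirby--Siebenmann class, associated to the coefficient sequence $0\to\mathrm{Ker}\,\mu\to\theta^H_3\xrightarrow{\mu}\mathbb{Z}_2\to 0$. Proving this requires the Galewski--Stern machinery relating PL resolutions, homology cobordism of $3$-spheres, and the Rokhlin invariant homomorphism $\mu$; the paper does not reprove it but cites it from Galewski--Stern, Davis--Fowler--Lafont, and Saveliev as background.

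What you have written instead is an outline of an argument for the paper's Main Theorem: the construction of an $\varepsilon$-map from a non-triangulable manifold onto a finite polyhedron by triangulating the complement of the ``bad set,'' truncating near it, and coning off the combinatorial frontier. That is a different theorem (and one for which Theorem~\ref{GS} is an input, used to locate where the triangulation obstruction lives and to show it vanishes on $M\setminus\{p\}$). Since you have not engaged with the content of Theorem~\ref{GS} --- the existence of the obstruction class, its identification as a Bockstein, or the role of $\theta^H_3$ and $\mu$ --- there is nothing in the proposal that can be salvaged as a proof of the stated result. You would need to start from the Galewski--Stern theory of simplicial triangulations and the Kirby--Siebenmann obstruction theory, not from the $\varepsilon$-map construction.
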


The 
homomorphism $\mu: \theta^H_3 \to \mathbb{Z}_2 $ is the Rokhlin
invariant homomorphism of the abelian group $\theta^H_3$  of
homology cobordism classes of oriented PL homology $3$-spheres
with the operation of connected sum, cf. e.g. \cite{DFL, S}. For
the purposes of our paper it suffices to know that $\mbox{Ker}\
\mu$ is some nontrivial abelian group.

\begin{thm}\label{Ferry} {\rm (cf. }
\cite[for $n = 3$]{J},
  \cite[for $n = 4$]{FW}, {\rm and}
 \cite[for $n \ge 5$]{CF}{\rm )}. Let $\alpha$ be an
open cover of an $n$-manifold $M.$ Then there exists an open cover
$\beta$ of $M$  such that any proper $\beta$-map $g\colon M \to N$
onto any $n$-manifold $N$ is homotopic through  $\alpha$-maps
to a homeomorphism.
\end{thm}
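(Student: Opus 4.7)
The plan is to construct, for each of the three specified non-triangulable closed manifolds $M^n$ and each $\varepsilon>0$, a decomposition
$M^n = N\cup_L W$
in which $N$ is a compact triangulable $n$-manifold with boundary $L$ (a polyhedron), $W$ is a contractible closed subset of $M^n$ with $\partial W=L$, and $\op{diam}(W)<\varepsilon$. Given such a decomposition I build the polyhedral target
\[
P^n := N\cup_L cL,
\]
where $cL$ is the PL cone on $L$ attached to $N$ along the identity $L\to L$; since $L$ will in general fail to be a genuine sphere, $P^n$ is a finite $n$-dimensional polyhedron which is not a manifold at the cone vertex. Define
$f\colon M^n\to P^n$
to be the identity on $N$ and, on $W$, a continuous extension of $\op{id}_L$ sending a chosen interior point of $W$ to the cone vertex; such an extension exists because $W$ and $cL$ are both contractible. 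The $\varepsilon$-map property follows at once: fibres of $f$ over $N\subset P^n$ are singletons, while fibres over $cL\subset P^n$ lie in $W$ and therefore have diameter less than $\varepsilon$. And since $f$ is the identity on $N$, the restriction $f|_W\colon W\to cL$ is a homotopy equivalence of contractible spaces relative to the common boundary $L$, and $L$ includes into each of $W$ and $cL$ as a cofibration, the gluing lemma for pushouts of cofibrations implies that $f$ is a homotopy equivalence.

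The content of the proof lies in producing the decomposition in each case. For Freedman's $|E_8|$, almost-smoothness (Theorem \ref{Freedman} and Definition \ref{Almost-smooth}) combined with Cairns--Whitehead (Theorem \ref{Cairns}) gives $|E_8|\setminus\{p\}$ the structure of a PL 4-manifold; I exhaust this by a nested sequence of compact PL submanifolds $N_1\subset N_2\subset\cdots$ with PL boundaries $L_k=\partial N_k$, so the complements $W_k=|E_8|\setminus\op{int}(N_k)$ form a neighborhood base at $p$ and some $W_k$ has diameter less than $\varepsilon$; this $W_k$ is a contractible topological $4$-manifold and $L_k$ is a PL homology $3$-sphere (the Poincar\'e sphere). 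For Galewski--Stern's $5$-manifold $M^5$, I would observe that for any $p$ the open punctured manifold $M^5\setminus\{p\}$ has $H^5=0$, so its Galewski--Stern obstruction $\beta(\Delta(\cdot))$ of Theorem \ref{GS} vanishes and it admits a simplicial triangulation; exhausting it by compact subcomplexes $N_k$ with complements $W_k$ that are contractible (as cone-like open neighborhoods of the end at $p$) once again yields small $W_k$. For Manolescu's examples in dimensions $n\geq 6$, I would exploit the fact that they are obtained by product-type or analogous local constructions from the Galewski--Stern $5$-manifold with auxiliary PL factors such as tori; the $n$-dimensional $\varepsilon$-map then arises as the product of the dimension-$5$ $\varepsilon$-map with the identity on the PL factor, with target the corresponding product polyhedron.

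The main obstacle is the last case. In dimensions $\geq 6$ neither the Kirby--Siebenmann class of Theorem \ref{KS} nor the Galewski--Stern obstruction of Theorem \ref{GS} can be killed by puncturing at a single point (the relevant cohomology groups do not vanish after removing a point or a small ball), so the direct local-excision argument that works in dimensions $4$ and $5$ has no immediate analogue. The proof therefore has to unpack the specific product-type form of Manolescu's examples, verify that such a product decomposition transmits the dimension-$5$ construction to higher dimensions, and check that the fibres of the resulting product map are uniformly small with respect to a metric generating the given topology. Once the structural decomposition $M^n=N\cup_L W$ with $\op{diam}(W)<\varepsilon$ and triangulable $N$ is in place in each case, the cone-collapse definition of $P^n$, the $\varepsilon$-map verification, and the homotopy equivalence check through the gluing lemma are essentially formal.
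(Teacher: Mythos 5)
Your proposal is in fact a proof of the paper's \emph{Main Theorem}, not of Theorem~\ref{Ferry}, which is the Chapman--Ferry/Ferry--Weinberger/Jakobsche approximation theorem that the paper merely cites without proof. Comparing with the paper's proof of the Main Theorem: the overall strategy (cut out a small neighbourhood of the singular point, replace it by a finite polyhedron glued along the interface, deduce the homotopy equivalence by a cofibration gluing lemma, and handle Manolescu's manifolds by taking the product with $T^n$) is essentially the same. But there is a genuine gap: you assert without proof that the small neighbourhood $W$ is contractible, and your entire construction --- replacing $W$ by the cone $cL$ and appealing to a map between contractible spaces --- hinges on that. For a generic PL exhaustion $N_1\subset N_2\subset\cdots$ of $|E_8|\setminus\{p\}$, the complements $W_k=|E_8|\setminus\op{int}(N_k)$ need not be contractible: compactness, $p\in\op{int}(W_k)$, and $\partial W_k$ being a closed PL $3$-manifold do not force trivial homotopy groups. (Your parenthetical identification of $\partial W_k$ with the Poincar\'e sphere is a feature of one particular model of $|E_8|$ built from the $E_8$-plumbing, not a consequence of an arbitrary exhaustion, though that claim is not actually used.) If $W$ is not contractible, $f|_W\colon W\to cL$ is not a homotopy equivalence and the gluing lemma gives nothing.

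The paper sidesteps this entirely. Its analogue of your $W$, there called $L$, is not assumed contractible; the paper instead observes that $L$ is compact, locally contractible and finite-dimensional, hence a compact ANR by Borsuk's theorem, hence homotopy equivalent to \emph{some} finite $4$-dimensional polyhedron $B$ by the theorems of West and Wall --- regardless of what $L$ looks like. The polyhedral target is then $M_F^4\cup_g B$, where $g$ is made simplicial near the interface by cellular approximation, and the quotient map is the desired $\varepsilon$-map. This costs a more delicate gluing argument but asks nothing about the local topology near $p$. To repair your cone-based variant you would have to produce a neighbourhood base of $p$ consisting of contractible compact PL manifolds-with-boundary inside $|E_8|\setminus\{p\}$, a collaring assertion about the end at $p$ that your proposal does not address and that would require its own argument; the same objection applies verbatim to your treatment of the Galewski--Stern case.
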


For uniformity we denote the manifolds of Freedman,
Galewski and Stern, and Manolescu by
 $M_F^4,$ $M_{GS}^5$ and
$M_M^{5+n}$, respectively.

\medskip

\section{Proof of the Main Theorem}

First consider the manifold $M_F^4.$ 
According to Theorems
\ref{Cairns} 
and 
\ref{Freedman}, 
the complement 
$M_F^4
\setminus \{p\}$
of
 a point  
$p \in M_F^4$
is a polyhedron. Let $\varepsilon>0$ be any positive
number. 
Let $U \subset M_F^4$ be an open topological ball in $M_F^4$ with center at
$p$ and with diameter less than $\varepsilon.$ Consider a
triangulation $T$ of $M_F^4 \setminus \{p\}$ and let $K$ be the
polyhedron which is the union of all simplices of $T$ which intersect
with $M_F^4 \setminus U$ and let $L$ be the compactum which is the union of
all the remaining simplices in $U$ and the point $p$.

Obviously, $L$ is a locally
contractible $4$-dimensional space and therefore by the Borsuk theorem
\cite{Bor} it is a compact ANR. By the above mentioned theorems of West and Wall
 it follows that $L$ is homotopy equivalent to a finite
$4$-dimensional  polyhedron, call it $B$. We have a homotopy
equivalence $f\colon L \rightarrow B$. Consider the closed star
$st(f(p))$ of the point $f(p)$ in the finite polyhedron $B$, that
is the union of all simplices of $B$ containing the point $f(p)$.
This is a closed neighborhood of the point $p$. Consider the
preimage of $st(f(p)).$  We get a closed neighborhood of the point
$p$ in $L$.

Consider the second barycentric subdivision of  $st(f(p))$ and the
closed star of the point $f(p)$ in it. Call it $st_2(f(p)).$
Obviously,
$$f^{-1}(st_2(f(p))) \subset {\rm Int}\ f^{-1}(st(f(p))).$$
Let $d$ be the minimal distance between the points
of the
boundary $\partial(f^{-1}(st(f(p)))$ and the points of
compactum $f^{-1}(st_2(f(p))).$ Clearly, $d > 0.$

Consider small
triangulation of  $T$ such that the diameters of all of its simplices
are less than $d$ and let $Q$ be the union of all simplices of this
triangulation which intersect  $f^{-1}(st_2(f(p)))$ and the
point $p.$ We get a 
relative CW complex $(L, Q)$ and the
mapping $$f\colon (L, Q) \rightarrow (B, st(f(p))).$$ By the Cellular
Approximation Theorem \ref{Cellular},
 the map $f$ is homotopy
equivalent to a map $g$  such that its restriction to $L\cap K$ is
a simplicial map (in our case the relative CW complexes are
obviously relative simplicial complexes).

Since $M_F^4$ and $L$ are ANR's, the pair $(M_F^4, L)$ has the
homotopy extension property with respect to any space \cite[p.
120]{Hu}, \cite{Wh}. Therefore $M_F^4$ is homotopy equivalent to
the quotient space $M_F^4\cup_g B$, cf. \cite[p. 26, Corollary
(5.12)]{Wh}.

The space $M_F^4\cup_g B$ is the union
of two finite polyhedra intersection of which is $g(L\cap K)$,
a
subpolyhedron of both of these polyhedra and therefore $M_F^4\cup_g
B$ is a finite polyhedron. Obviously, the quotient map of $M_F^4$
onto $M_F^4\cup_g B$ is an $\varepsilon$-map.

Consider now the Galewski-Stern manifold $M_{GS}^5$. The
obstruction to triangulability of any manifold $M^n$ for $n\geq5$
is the obstruction $$\beta(\Delta)(M^n) \in H^5(M^n;
\mbox{Ker}\ \mu)$$
(cf. Theorem \ref{GS}). Manolescu  \cite{M}
has
showed that
$\beta(\Delta)(M^5_{GS})$ is nontrivial  and therefore the
manifold $M_{GS}^5$ is non-triangulable.

However, for any connected
closed $5$-dimensional manifold,
$$H^5(M^5\setminus \{p\}; \mbox{\rm Ker}\ \mu) = 0$$ and therefore the Galewski-Stern
obstruction $\beta(\Delta)(M^5_{GS}\setminus \{p\})$ is trivial
and $M^5_{GS}\setminus \{p\}$ is an infinite polyhedron. Using the
arguments similar to those used in the proof of the theorem for
the Freedman manifold it follows that for every $\varepsilon > 0$
there exists an $\varepsilon$-map of $M_{GS}^5$ onto a
$5$-dimensional polyhedron $P^5$ which is a homotopy equivalence.

The Manolescu non-triangulable manifold $M_M^{5+n}$ is the product
of $M_{GS}^5$ with the torus $T^n$. Since  $M_{GS}^5$
admits an $\varepsilon$-map onto $P^5$ which is a homotopy
equivalence for an arbitrarily small $\varepsilon>0$,  it follows that obviously, $$M_M^{5+n} = M_{GS}^5 \times
T^n$$ also admits an $\varepsilon$-map which is a homotopy equivalence onto
$P^5\times T^n$ for arbitrarily small $\varepsilon>0$.

\section{Some complementary results and remarks}

\begin{thm}\label{Noalmost-smooth}
There do not exist any  non-triangulable almost-smooth manifolds $M^n$ for any
$n
> 4$.
\end{thm}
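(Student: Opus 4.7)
The plan is to show that almost-smoothness together with $n>4$ forces the Kirby-Siebenmann class of $M^n$ to vanish, after which Theorem \ref{KS} will supply a $PL$ structure on $M^n$ and hence a simplicial triangulation, contradicting the assumption of non-triangulability.

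First, fix any point $p\in M^n$. By the almost-smoothness hypothesis, the open manifold $M^n\setminus\{p\}$ carries a smooth structure; by the Cairns-Whitehead Theorem \ref{Cairns} it therefore admits a $PL$ structure. Applying Theorem \ref{KS} to this $PL$ structure gives $\Delta(M^n\setminus\{p\})=0$.

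Next, I propagate this vanishing to $M^n$ using naturality. Writing $i\colon M^n\setminus\{p\}\hookrightarrow M^n$ for the inclusion, the remarks following the definition of $\Delta$ give $i^{\ast}\Delta(M^n)=\Delta(M^n\setminus\{p\})=0$. The one nontrivial step, and the only place where the hypothesis $n>4$ really enters, is to verify that the restriction $i^{\ast}\colon H^4(M^n;\mathbb{Z}_2)\to H^4(M^n\setminus\{p\};\mathbb{Z}_2)$ is injective. For this I would use the long exact cohomology sequence of the pair $(M^n,M^n\setminus\{p\})$: by excision the local cohomology $H^k(M^n,M^n\setminus\{p\};\mathbb{Z}_2)$ is isomorphic to $H^k(\mathbb{R}^n,\mathbb{R}^n\setminus\{0\};\mathbb{Z}_2)\cong\tilde H^{k-1}(S^{n-1};\mathbb{Z}_2)$, which vanishes for every $k<n$. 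Since $n\geq 5$, the group $H^4(M^n,M^n\setminus\{p\};\mathbb{Z}_2)$ is $0$, so $i^{\ast}$ is injective in degree $4$, and consequently $\Delta(M^n)=0$.

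Finally, since $\dim M^n=n\geq 5$, the nontrivial direction of Theorem \ref{KS} yields a $PL$ structure on $M^n$, and every $PL$ manifold is triangulable by a simplicial complex. This contradicts the assumption that $M^n$ is non-triangulable and completes the argument. I expect the verification of the injectivity of $i^{\ast}$ in degree $4$ to be the main technical point; everything else is a direct concatenation of the results collected in the preliminaries.
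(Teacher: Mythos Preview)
Your argument is correct and is essentially the paper's proof run in the contrapositive direction: the paper starts from non-triangulability, uses the hard direction of Theorem~\ref{KS} (for $n\ge 5$) to get $\Delta(M^n)\neq 0$, and then the same exact sequence of the pair $(M^n,M^n\setminus\{p\})$ with $H^4(M^n,M^n\setminus\{p\};\mathbb{Z}_2)=0$ to conclude $\Delta(M^n\setminus\{p\})\neq 0$, contradicting almost-smoothness. The key ingredients (naturality of $\Delta$, the exact sequence of the pair, and the excision computation you spell out) are identical.
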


\begin{proof}
Since $M^n$ is non-triangulable it does not have a PL structure
and therefore the Kirby-Siebenmann obstruction $$\Delta(M^n) \in
H^4(M^n;Z_2)$$ is nonzero, by Theorem \ref{KS}. It follows from the
exact sequence of the pair $(M^n, M^n\setminus \{p\}):$
$$0=H^4(M^n, M^n\setminus \{p\}; \mathbb{Z}_2)\rightarrow H^4(M^n;\mathbb{Z}_2)
\rightarrow H^4(M^n\setminus \{p\}; \mathbb{Z}_2)$$ \noindent
that $\Delta ( M^n\setminus \{p\})$ is nonzero for $n > 4$.
Therefore $ M^n\setminus \{p\}$ does not have a PL structure and
is thus not smooth.
\end{proof}

\begin{thm}\label{Non-triangulable}
A non-triangulable connected manifold of dimension $n > 4$ has an
infinite simplicial complex structure in the complement of a point
if and only if $n=5.$
\end{thm}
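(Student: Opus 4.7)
The plan is to apply the Galewski-Stern obstruction (Theorem \ref{GS}) to the open manifold $M^n \setminus \{p\}$ and compare it with that of $M^n$ via the long exact cohomology sequence of the pair $(M^n, M^n \setminus \{p\})$, together with the naturality of the Kirby-Siebenmann class recorded in the preliminaries.

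For the ``if'' direction ($n=5$), the argument is the one already used for $M^5_{GS}$ in the proof of the Main Theorem: $M^5 \setminus \{p\}$ is a connected non-compact topological $5$-manifold, so for purely dimensional reasons $H^5(M^5 \setminus \{p\};\mbox{Ker}\ \mu) = 0$. Hence $\beta(\Delta(M^5 \setminus \{p\}))$ is forced to vanish, and Theorem \ref{GS} produces a simplicial triangulation of $M^5 \setminus \{p\}$, necessarily infinite since $M^5 \setminus \{p\}$ is non-compact.

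For the ``only if'' direction, assume $n > 5$ and $M^n$ is non-triangulable. By Theorem \ref{GS}, $\beta(\Delta(M^n)) \ne 0$ in $H^5(M^n;\mbox{Ker}\ \mu)$. I would write the fragment
$$H^5(M^n,M^n\setminus\{p\};\mbox{Ker}\ \mu) \longrightarrow H^5(M^n;\mbox{Ker}\ \mu) \stackrel{i^*}{\longrightarrow} H^5(M^n\setminus\{p\};\mbox{Ker}\ \mu)$$
of the long exact sequence of the pair and invoke excision together with the standard local computation $H^k(\mathbb{R}^n,\mathbb{R}^n\setminus\{0\};G) = G$ for $k=n$ and $0$ otherwise. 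Since $n > 5$, the leftmost term vanishes, so $i^*$ is injective. Naturality of $\Delta$ under inclusion (as stated in the preliminaries, $i^4(\Delta(M)) = \Delta(U)$) combined with naturality of the Bockstein gives $i^*(\beta(\Delta(M^n))) = \beta(\Delta(M^n\setminus\{p\}))$, which is therefore nonzero. Theorem \ref{GS}, applied to the open manifold $M^n \setminus \{p\}$, then rules out any simplicial triangulation.

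The only point requiring care is that Theorem \ref{GS} is being used on the non-compact manifold $M^n \setminus \{p\}$ rather than on a closed manifold; the statement of the theorem in the preliminaries covers this case, and the functoriality needed is exactly what is recorded there. All the remaining work reduces to the two cohomological inputs above: the vanishing of top-dimensional cohomology of a connected non-compact $5$-manifold, and the excision/local-cohomology calculation that makes $i^*$ injective when $n > 5$.
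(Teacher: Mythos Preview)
Your proof is correct and follows essentially the same route as the paper: both directions hinge on the Galewski--Stern obstruction, the vanishing of $H^5(M\setminus\{p\};\mbox{Ker}\ \mu)$ when $n=5$, and the injectivity of the restriction $H^5(M;\mbox{Ker}\ \mu)\to H^5(M\setminus\{p\};\mbox{Ker}\ \mu)$ from the long exact sequence of the pair when $n>5$. You supply slightly more detail (the excision computation and the remark on applying Theorem~\ref{GS} to a non-compact manifold), but the argument is the same.
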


\begin{proof}
The obstruction class $\beta(\Delta)(M\setminus \{p\})$ is
obtained as the image of $\beta(\Delta)(M)$ by the restriction
$$H^5(M; \mbox{\rm Ker}\ \mu )
\rightarrow 
H^5(M \setminus \{p\}; \mbox{\rm Ker}\
\mu).$$ When $n=5$, we have $$H^5(M \setminus \{p\}; \mbox{\rm
Ker}\ \mu)=0$$ since $M$ is a connected manifold, hence
 the obstruction vanishes. When $n >
5$, we see that by the exact sequence of the pair $(M, M\setminus \{p\})$ we get

$$0=H^5(M, M\setminus \{p\}; \mbox{\rm Ker}\ \mu)\rightarrow H^5(M;\mbox{\rm Ker}\ \mu) \rightarrow H^5(M\setminus \{p\};
\mbox{\rm Ker}\ \mu).$$ 
The restriction is injective, so the image
of $\beta(\Delta)(M)$ is non-zero and $M\setminus \{p\}$ is
non-triangulable as simplicial complex, by Theorem \ref{GS}.
\end{proof}

\begin{rmk}
For non-triangulable manifolds there do not exist
$\varepsilon$-maps onto a triangulable manifold for small enough
$\varepsilon >0$. This follows from deep results of Chapman and Ferry,
Ferry and Weinberger, and Jakobsche {\rm (cf. Theorem \ref{Ferry})}.
\end{rmk}

\section{Epilogue}

The non-triangulable closed $4$-manifolds of Freedman \cite{F,
Sc}, $5$-manifolds of Galewski and Stern \cite{DFL,GS}, and
$n$-manifolds
of Manolescu  for $n\geq 6$ \cite[p. 148]{M} have nice geometric
descriptions. All of them are homotopy equivalent to  polyhedra
of the corresponding dimension. We have polyhedral homotopy
representatives $PH^4_{F}, PH^5_{GS}, PH^{5+n}_M$ of the above
mentioned non-triangulable manifolds.

\begin{prb}
Find a geometric description of the polyhedra $PH^4_{F},
PH^5_{GS}$ and $PH^{5+n}_M$.
\end{prb}

The Alexandroff-Borsuk problem is solved for the special class of
non-triangulable manifolds and is still open for general
non-triangulable manifolds. The following problems seem to be of
interest:

\begin{prb}
Let $M^n$ be any non-triangulable manifold. Does there exist
 any polyhedron $P$ embeddable
 in $M^n$, such that $M^n
\setminus P$ is also a polyhedron?
\end{prb}

According to our Main Theorem, for every positive number
$\varepsilon$ and for the manifolds of Freedman, Galewski and Stern,
and Manolescu there exist $\varepsilon$-maps of these manifolds
onto some polyhedra $PH^4_{F}, PH^5_{GS}, PH^{5+n}_M,$
respectively. The following version of the Alexandroff-Borsuk
Manifold Problem remains open:

\begin{prb}
Does there exist
for every compact $n$-dimensional manifold  $M^n,$ a finite $n$-dimensional polyhedron $P^n $ such that for an
arbitrarily small $\varepsilon>0$ there exists an $\varepsilon$-map $f\colon
M^n \to P^n $ which is a homotopy equivalence?
\end{prb}

The answer for the corresponding version of
Alexandroff-Borsuk ANR Problem
is negative, even for
$1$-dimensional compact absolute retracts (AR), i.e. for the dendrites.

\section{Acknowledgements}
This research was supported by the Slovenian Research Agency grants P1-0292, J1-5435, J1-6721
and J1-7025.
We are very grateful to A.V. Chernavskii
for reference \cite{CF}, to  S. Ferry for references \cite{FW, J},
and
C. Manolescu for sketches of proofs of Theorems
\ref{Noalmost-smooth} and
 \ref{Non-triangulable}.
We also acknowledge
their advice during the preparation of this paper.
We thank the referee for comments and suggestions.


\begin{thebibliography}{1}

\bibitem{A}
P.~S.~Alexandroff, 
\emph{$\ddot{U}$ber den allgemeinen Dimensionsbegriff und seine Beziehungen zur elementaren
geometrischen Anschauung}, 
Math. Ann.  \textbf{98} (1928), 617-636.

\bibitem{B}
K.~Borsuk, 
\emph{ Sur l'\'elimination de phenom\`enes paradoxaux en topologie g\'en\'erale},
Proc. Int. Congr. Math., Amsterdam, 1954, pp. 197-208.

\bibitem{Bor}
K.~Borsuk, 
\emph{Theory of Retracts}, 
Monografie Matematyczne {\bf 44}, PWN, Warsaw, 1967.

\bibitem{C2}
S.~ Cairns, 
\emph{On the triangulation of regular loci},
Ann. of Math.  (2)  \textbf{35} (1934),  579-587.

\bibitem{C1}
S.~ Cairns, 
\emph{A simple triangulation method for smooth manifolds},
Bull. Amer. Math. Soc. \textbf{67} (1961),  389-390.

\bibitem{CF}
T.~A.~ Chapman, S.~ Ferry, 
\emph{Approximating homotopy equivalences by homeomorphisms}, 
Amer. J. Math. \textbf{101} (1979), 583-607.

\bibitem{DFL}
M.~W.~ Davis, J.~ Fowler, J-F.~ Lafont,
\emph{Aspherical manifolds that cannot be triangulated}, 
Algebraic \& Geometric Topology \textbf{14} (2014),   795-803.

\bibitem{E}
 R.~ Engelking, 
\emph{Dimension Theory}, 
North-Holland Math. Library  \textbf{19}, North-Holland, Amsterdam, 1978.

\bibitem{FW}
S.~Ferry, S.~Weinberger, 
\emph{Curvature, tangentiality, and controlled topology}, 
Invent.  Math. \textbf{105} (1991), 401-414.

\bibitem{F}
M.~Freedman,
 \emph{The topology of four-dimensional manifolds},
J. Differential Geom. \textbf{17} (1982), 357-453.

\bibitem{FQ}
M.~Freedman, F.~Quinn, 
\emph{Topology of 4-Manifolds}, 
Princeton University Press, Princeton, NJ, 1990.

\bibitem{GS}
D.~ Galewski and R.~ Stern,
\emph{A universal 5-manifold with respect to simplicial triangulations}, 
Geometric Topology (Proc. Georgia Topology Conf., Athens,  GA, 1977),   
Academic Press, New York, 1979, pp. 345-350.

\bibitem{GS2}
D.~ Galewski and R.~ Stern, 
\emph{Classification of simplicial triangulations of topological manifolds}, 
Ann. of Math. (2) \textbf{111} (1980), 1-34.

\bibitem{Hu}
S.~T.~Hu, 
\emph{Theory of Retracts}, 
Wayne State University Press, Detroit, MI,  1965.

\bibitem{J}
W.~Jakobsche, 
\emph{Approximating homotopy equivalences of $3$-manifolds by homeomorphisms}, 
Fund. Math. \textbf{130} (1988), 157-168.

\bibitem{Jam}
I.~M.~James, 
\emph{History of Topology}, 
Elsevier North-Holland, Amsterdam,  1999.

\bibitem{KR}
U.~ Karimov and D.~ Repov\v s, 
\emph{On nerves of fine coverings of acyclic spaces}, 
Mediterr. J. Math. \textbf{12} (2015), 205-217.

\bibitem{KS}
R.~C.~Kirby, L.~C.~Siebenmann,
\emph{On the triangulation of manifolds and Hauptvermutung},
Bull. Amer. Math. Soc.  \textbf{75} (1969), 742-749.

\bibitem{L}
J.~M.~Lee, 
\emph{Introduction to Topological Manifolds}, 
Graduate Texts in Math. \textbf{202},  Springer-Verlag, New York, 2011.

\bibitem{M}
C.~Manolescu,
\emph{Pin(2)-equivariant Seiberg-Witten Floer homology and the triangulation conjecture}, 
J. Amer. Math. Soc. \textbf{29} (2016),  147-176.

\bibitem{MT}
J.~Morgan, G.~Tian,
{\sl Ricci Flow and the Poincar\'{e} Conjecture},
Clay Math. Monogr. {\bf 3},  American Mathematical Society, Providence, RI,
 Clay Mathematics Institute, Cambridge, MA, 2007.

\bibitem{Ru}
Yu.~Rudyak, 
\emph{Piecewise Linear Structures on Topological Manifolds}, 
World Scientific, Singapore, 2016.

\bibitem{S}
N.~Saveliev, 
\emph{Invariants for Homology $3$-Spheres},
Low-Dimensional Topology, I,  
Encyclopaedia of Mathematical Sciences \textbf{140}, 
Springer-Verlag, Berlin, 2002.

\bibitem{Sc}
A.~Scorpan,
\emph{The Wild World of 4-Manifolds}, 
Amer. Math. Soc., Providence, RI, 2005.

\bibitem{Sit}
K.~Sitnikov,
\emph{ Example of a two-dimensional set in
three-dimensional Euclidean space allowing arbitrarily small
deformations into a one-dimensional polyhedron and a certain new
characteristic of the dimension of sets in Euclidean spaces}
(in Russian), 
Doklady Akad. Nauk SSSR (N.S.) \textbf{88} (1953), 21-24.

\bibitem{Wa}
C.~T~.C.~Wall,
\emph {Finiteness conditions for CW-complexes},
 Ann. of Math. (2) \textbf{81} (1965), 56-69.

\bibitem{W}
J.~E.~West,
\emph{Mapping Hilbert cube manifolds to ANR's: a solution of a conjecture of Borsuk}, 
Ann. of Math. (2) \textbf{106} (1977), 1-18.

\bibitem{Wh}
G.~W.~Whitehead, 
\emph{Elements of Homotopy Theory}, 
Graduate Texts in Math. \textbf{61},  
Springer-Verlag, New York, 1978.

\bibitem{JWh}
J.~H.~C.~ Whitehead, 
\emph{On $C^1$ complexes}, 
Ann. of Math. (2) \textbf{41} (1940), 809-824.
\end{thebibliography}
\end{document}